\newcommand{\R}{\mathbb{R}}
\renewcommand{\a}{\mathbf{a}}
\renewcommand{\P}{\mathcal{P}}
\newtheorem{theorem}{Theorem}
\crefname{theorem}{Theorem}{Theorems}
\crefname{proposition}{Proposition}{Propositions}
\newtheorem{lemma}[theorem]{Lemma}
\crefname{lemma}{Lemma}{Lemmas}
\crefname{corollary}{Corollary}{Corollaries}
\crefname{conjecture}{Conjecture}{Conjectures}
\theoremstyle{definition}
\crefname{remark}{Remark}{Remarks}
\newtheorem{assumption}{Assumption}
\crefname{assumption}{Assumption}{Assumptions}
\newtheorem{definition}{Definition}
\crefname{definition}{Definition}{Definitions}
\crefname{example}{Example}{Examples}
\crefname{table}{Table}{Tables}
\crefname{section}{Section}{Sections}
\crefname{equation}{Equation}{Equations}
\crefname{figure}{Figure}{Figures}
\crefname{appendix}{Appendix}{Appendices}
\title[Explicit Calculations for Sono's Sieve]{Explicit Calculations for Sono's Multidimensional Sieve of $E_2$-Numbers}
\author[Daniel Goldston, Apoorva Panidapu, and Jordan Schettler]{Daniel A. Goldston, Apoorva Panidapu, and Jordan Schettler}
\date{}
\begin{document}
\maketitle

\begin{abstract}
    We derive explicit formulas for integrals of certain symmetric polynomials used in Keiju Sono's multidimensional sieve of $E_2$-numbers, i.e., integers which are products of two distinct primes.
    We use these computations to produce the currently best-known bounds for gaps between multiple $E_2$-numbers.
    For example, we show there are infinitely many occurrences of four $E_2$-numbers within a gap size of $94$ unconditionally and within a gap size of $32$ assuming the Elliott-Halberstam conjecture for primes and sifted $E_2$-numbers.
\end{abstract}

\section{Introduction}

Let $p_n$ denote the $n$th prime.
The prime number theorem implies the average of the gap $p_{n+1} - p_n$ between successive primes is asymptotic to $\log p_n$.
On the other hand, the still unproven twin prime conjecture asserts that the gap $p_{n+1}-p_n=2$ occurs infinitely often.
The groundbreaking development in \cite{Gold0} by Goldston, Pintz, and Y{\i}ld{\i}r{\i}m of the GPY sieve, a variant of the Selberg sieve, established that there are always gaps much smaller than average:
\[\liminf_{n\rightarrow \infty} \frac{p_{n+1}-p_n}{\log p_n} = 0.\]
In fact, assuming the Elliott-Halberstam conjecture for primes (denoted $\mathrm{BV}[1,\P]$ as below), they also showed that
\[\liminf_{n\rightarrow \infty} (p_{n+1}-p_n)\leq 16.\]
Yitang Zhang was the first to establish an unconditional result on finite gaps between pairs of primes.
By using ideas from the GPY sieve along with proving a sufficiently weakened version of $\mathrm{BV}[1,\P]$, he showed in \cite{Zhan} that
\[\liminf_{n\rightarrow \infty} (p_{n+1}-p_n)\leq 70\,000\,000.\]
In general, for any positive integer $\nu$ we define
\[H_\nu \mathrel{\mathop :}= \liminf_{n\rightarrow \infty}(p_{n+\nu}-p_n).\]
Shortly after Zhang's result, Maynard (in \cite{Mayn}) and Tao (unpublished work) developed a multidimensional GPY sieve and showed that $H_\nu$ was finite for all $\nu$ with $H_1\leq 600$ unconditionally.
The Polymath8 Project further refined these techniques to obtain the currently best-known unconditional estimate $H_1\leq 246$ in \cite{Poly}.


The GPY sieve methods can establish even stronger results for gaps between $E_2$-numbers (products of two distinct primes), which are more plentiful than prime numbers.
Let $q_n$ denote the $n$th $E_2$-number, i.e.,
\[q_1=6, q_2=10, q_3=14, q_4=15, \ldots,\]
and let $G_{\nu}$ denote the gap for $\nu+1$ successive $E_2$-numbers:
\[G_{\nu} \mathrel{\mathop :}= \liminf_{n\rightarrow \infty}(q_{n+\nu}-q_n).\]
We say that an $E_2$-number is ``sifted'' when it takes the form $p_{k_1}p_{k_2}$ where the prime factors satisfy the inequalities $N^{\eta} < p_{k_1} \leq N^{1/2} < p_{k_2}$ for arbitrary constants $\eta$ and $N$,  with $0 < \eta \leq 1/4$ and $N$ a large positive integer.
Now let $\tilde{q}_n$ denote the $n$th sifted $E_2$-number and let $\widetilde{G}_{\nu}$ denote the gap for $\nu+1$ such sifted $E_2$-numbers as $N$ becomes large:
\[\widetilde{G}_{\nu} = \lim_{N\rightarrow \infty}\liminf_{n\rightarrow \infty}(\tilde{q}_{n+\nu}-\tilde{q}_n).\]
Sifted $E_2$-numbers have conjectured gaps comparable to prime numbers, while unsifted $E_2$-numbers should have smaller gaps.
The Hardy-Littlewood $k$-tuple conjectures imply $\widetilde{G}_\nu = H_\nu = H(\nu+1)$ and $G_\nu = G(\nu+1)$ where $H(k)$ (respectively $G(k)$) is the smallest diameter of an admissible (respectively $E_2$-admissible) $k$-tuple.
The definitions for admissible and $E_2$-admissible are given in the section below.
Goldston, Graham, Pintz, and Y{\i}ld{\i}r{\i}m showed in \cite{Gold1} that we have finite gaps $G_\nu \leq \widetilde{G}_{\nu} \leq \nu e^{\nu-\gamma}(1+o(1))$ for all $\nu$ where $\gamma$ is Euler's constant, and they also established the explicit bound $\widetilde{G}_1 \leq 6$.

Following Maynard's ideas, Sono developed a multidimensional sieve for sifted $E_2$-numbers in \cite{Sono} and showed that $\widetilde{G}_2\leq 12$ under the assumption of the Elliott-Halberstam conjecture for primes $\mathrm{BV}[1,\mathcal{P}]$ and sifted $E_2$-numbers $\mathrm{BV}[1,\widetilde{\mathcal{E}}_2]$.
The present authors in \cite{Gold3} were able to obtain this result of Sono's using only the one-dimensional GGPY sieve and further proved that $\widetilde{G}_2\leq 32$ unconditionally.
Sono's multidimensional sieve, however, should be able to produce results that could not be obtained from the one-dimensional sieve.
The purpose of the present paper is to prove the following theorem using Sono's sieve, which gives \cref{E2table} showing the currently best-known gaps for $E_2$-numbers. In particular, the last two columns from the third row and beyond are entirely new, while the first two theoretical columns are provided for comparison.

\begin{theorem}\label{maintheorem}
We have the following table for gaps between $\nu+1$ successive $E_2$-numbers where the Elliott-Halberstam column indicates that we are assuming both $\mathrm{BV}[1,\mathcal{P}]$ and $\mathrm{BV}[1,\widetilde{\mathcal{E}}_2]$ there.
\begin{table}[ht]
 \setlength{\tabcolsep}{3pt}
    \centering
    \begin{tabular}{c|cccc}
        $\nu$ & Hardy-Littlewood & Hardy-Littlewood & Elliott-Halberstam & Unconditional  \\ \hline 
        \vspace{-0.35 cm} & & & &\\
        1 & $G_1=G(2)=1$ & $\widetilde{G}_1=H(2)=2$  & $\widetilde{G}_1\leq H(3)=6$    & $\widetilde{G}_1\leq H(3)=6$ \\
        2 & $G_2=G(3)=2$ & $\widetilde{G}_2=H(3)=6$  & $\widetilde{G}_2\leq H(5)=12$   & $\widetilde{G}_2\leq H(10)=32$ \\
        3 & $G_3=G(4)=4$ & $\widetilde{G}_3=H(4)=8$  & $\widetilde{G}_3\leq H(10)=32$  & $\widetilde{G}_3\leq H(23)=94$ \\
        4 & $G_4=G(5)=5$ & $\widetilde{G}_4=H(5)=12$ & $\widetilde{G}_4\leq H(16)=60$  & $\widetilde{G}_4\leq H(49)=240$ \\
        5 & $G_5=G(6)=6$ & $\widetilde{G}_5=H(6)=16$ & $\widetilde{G}_5\leq H(25)=110$ & $\widetilde{G}_5\leq H(102)=576$ \\
        6 & $G_6=G(7)=8$ & $\widetilde{G}_6=H(7)=20$ & $\widetilde{G}_6\leq H(37)=168$ & $\widetilde{G}_6\leq H(225) = 1440$ \\
    \end{tabular}
    \vspace{0.35 cm}
    \caption{Bounds for Gap Sizes of $\nu+1$ Successive $E_2$-numbers}
    \label{E2table}
\end{table}
\end{theorem}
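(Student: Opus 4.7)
The overall plan is to apply Sono's multidimensional sieve (developed in \cite{Sono}) to an $E_2$-admissible $k$-tuple $(h_1,\ldots,h_k)$ and to determine, for each value of $\nu$ in the table, the smallest $k$ for which the sieve produces infinitely many shifts $n$ such that at least $\nu+1$ of the translates $n+h_i$ are sifted $E_2$-numbers. Once such a minimal $k=k(\nu)$ is identified (separately in the unconditional case and under $\mathrm{BV}[1,\mathcal{P}]+\mathrm{BV}[1,\widetilde{\mathcal{E}}_2]$), the bound $\widetilde{G}_\nu\leq H(k(\nu))$ is immediate, and one reads the displayed numerical entries off from the known values of $H(k)$ for small $k$.

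The first step is to recall Sono's setup: one takes sieve weights built from a smooth symmetric test function $F(t_1,\ldots,t_k)$ supported on the simplex $\{t_i\geq 0,\sum t_i\leq 1\}$, and Sono's main proposition reduces the problem to comparing a ``main term'' integral $I_k(F)$ against a sum of ``detection'' integrals $J_k^{(m)}(F)$ running over the coordinates $m$. The sieve succeeds in capturing $\nu+1$ sifted $E_2$-numbers as soon as the ratio
\[
M_k(F)\;=\;\frac{\sum_{m=1}^{k}J_k^{(m)}(F)}{I_k(F)}
\]
exceeds an explicit threshold $\rho(\nu)$ that depends on $\nu$ and on whether the Elliott-Halberstam hypotheses are in force (the admissible level of distribution enters through $\rho$). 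Thus the first task is to fix a convenient finite-dimensional family of symmetric test polynomials $F$ and turn the problem into a finite-dimensional optimization: find $F$ in the family maximizing $M_k(F)$, then check whether $M_k(F)>\rho(\nu)$.

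Here the technical heart of the paper enters. I would take $F$ to be a symmetric polynomial expressed in the power sums $P_j=\sum t_i^j$ (equivalently in the elementary symmetric polynomials) up to some bounded total degree $D$, truncated to the simplex. The integrals $I_k(F)$ and $J_k^{(m)}(F)$ then reduce to integrals of monomials $P_1^{a_1}P_2^{a_2}\cdots P_D^{a_D}$ over the standard simplex, and the main analytic work is to produce closed-form expressions for these integrals in $k$, the $a_j$, and the exponents appearing in Sono's kernel. With these explicit formulas in hand, $M_k(F)$ becomes a ratio of two quadratic forms in the polynomial coefficients, and its supremum over the chosen family is the largest eigenvalue of a generalized eigenvalue problem that can be solved numerically. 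This is the step I expect to be the real obstacle: both deriving the closed forms for the symmetric polynomial integrals in a shape that is practical for moderately large $k$, and choosing the degree $D$ large enough for the optimal $M_k(F)$ to clear $\rho(\nu)$ while keeping the linear algebra tractable.

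Finally, I would, for each row $\nu\in\{1,\ldots,6\}$ of \cref{E2table}, scan $k$ upward, running the eigenvalue computation at each $k$ until $M_k(F)>\rho(\nu)$; the first $k$ that works is $k(\nu)$, and one then exhibits an explicit $E_2$-admissible $k(\nu)$-tuple of diameter equal to the known $H(k(\nu))$ (these tuples are available from the standard narrow admissible tuples databases, and one verifies the $E_2$-admissibility condition as defined in the next section of the paper). The two distinct thresholds $\rho(\nu)$—one assuming $\mathrm{BV}[1,\mathcal{P}]$ and $\mathrm{BV}[1,\widetilde{\mathcal{E}}_2]$, one not—yield the ``Elliott-Halberstam'' and ``Unconditional'' columns of the table respectively. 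The Hardy-Littlewood columns serve only for comparison and follow from the $k$-tuple conjectures together with the definitions of $H(k)$ and $G(k)$ reviewed in the next section, so they require no sieve input.
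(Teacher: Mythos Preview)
Your high-level architecture is right: choose $F$ as a symmetric polynomial in power sums, convert the sieve criterion into a generalized eigenvalue problem for the coefficient vector, and scan $k$ upward until the optimal ratio clears $\nu$. That is exactly what the paper does.

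However, there is a genuine gap. What you have written is essentially Maynard's criterion for primes, not Sono's criterion for sifted $E_2$-numbers. Sono's inequality (\cref{sono}) is not a single ratio $kJ_k(F)/I_k(F)$ against a threshold $\rho(\nu)$; it involves four integral quantities $I_k(F)$, $J_k(F)$, $L_k(F)$, $M_k(F)$, where $L_k$ and $M_k$ carry an additional integration in a parameter $\xi\in(\eta,\theta/2)$ against a kernel $(\theta/2-\xi)^j/(\xi(1-\xi))$ and involve the \emph{shifted} test function $F_\xi(x_1,\ldots,x_k)=F\bigl(\tfrac{2\xi}{\theta}+(1-\tfrac{2\xi}{\theta})x_1,x_2,\ldots,x_k\bigr)$. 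The numerator also contains a $\log((1-\eta)/\eta)$ factor, so one must first isolate and cancel the logarithmic divergence as $\eta\to 0^+$ before anything becomes a quadratic form in the coefficients. The paper handles this by defining $\widetilde{L}_k(F)$, $\widetilde{M}_k(F)$, and $\widetilde{J}_k(F)$, and the main technical work (\cref{tildetheorem}) is producing explicit closed forms for these in terms of hypergeometric values ${}_2F_1(1,m;m+n+1;\theta/2)$ and harmonic numbers---identities that go well beyond the beta-function computations sufficient for Maynard's $I_k$, $J_k$. Your proposal never mentions $F_\xi$, the $\xi$-integration, or the limit in $\eta$, so the eigenvalue problem you describe cannot actually be assembled from what you have.

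Two smaller corrections: the level of distribution $\theta$ enters directly into the ratio (via $L_k$, $M_k$, and the prefactor $\theta/2$), not through a separate threshold $\rho(\nu)$---the threshold is simply $\nu$ in both the conditional and unconditional cases. And the tuples needed are ordinary admissible tuples of diameter $H(k)$, not $E_2$-admissible ones; Sono's theorem is stated for admissible $\mathcal{H}$ and yields $\widetilde{G}_\nu\le H(k)$.
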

The proof of \cref{maintheorem} will follow from Sono's \cref{sonotheorem} combined with an intricate analysis of certain integrals $I_k(F)$, $J_k(F)$, $L_k(F)$, $M_k(F)$ done via extensions of the explicit computations for $I_k(F)$, $J_k(F)$ from Section 8 in \cite{Mayn} to $L_k(F)$, $M_k(F)$. See \cref{Maynard,Glem,tildetheorem} below.

\section{Theorems of Maynard and Sono}

In order to state the results of the multidimensional sieves of Maynard and Sono, we first need to state some definitions and results about the distribution of primes and $E_2$-numbers.

Let $\pi(x;q)$ denote the number of primes which are relatively prime to $q$ and less than or equal to $x$, and let $\pi(x;q,a)$ denote the number of primes which are congruent to a unit $a$ modulo $q$ and less than or equal to $x$.
Then $\pi(x;q,a)\sim \pi(x;q)/\varphi(q)$ as $x \rightarrow \infty$, and a level of distribution for primes describes the average error in this asymptotic over large intervals.
More precisely,
if we take $\pi^\flat(x;q) = \pi(2x;q)-\pi(x;q)$ and $\pi^\flat(x;q,a)=\pi(2x;q,a)-\pi(x;q,a)$, then we say the primes have a level of distribution $\theta \in (0,1]$ if the following statement holds.
\begin{assumption}[{$\mathrm{BV}[\theta, \mathcal{P}]$}]
For all $\varepsilon>0$ and $A>0$, we have
\[\mathop{\sum_{q\leq x^{\theta-\varepsilon}}}_{q \,\, \mathrm{squarefree}} \max_{(a,q)=1} \left|\pi^\flat(x;q,a)-\frac{\pi^\flat(x;q)}{\varphi(q)} \right| \ll_A \frac{x}{\log^A x}   \]
as $x\rightarrow \infty$.
\end{assumption}
The Bombieri-Vinogradov theorem \cite{Bomb,Vino} states that $\mathrm{BV}[1/2, \mathcal{P}]$ holds, while the unproven Elliott-Halberstam conjecture for primes \cite{Elli} states that $\mathrm{BV}[1, \mathcal{P}]$ holds. 
To state the analogs for $E_2$-numbers, we define $\widetilde{\pi}_2(x;q)$ to be the number of sifted $E_2$-numbers which are relatively prime to $q$ and less than or to equal to $x$, and take $\widetilde{\pi}_2(x;q,a)$ to be the number of sifted $E_2$-numbers which are congruent to a unit $a$ modulo $q$ and less than to equal to $x$.
As above, we take $\widetilde{\pi}_2^\flat(x;q)=\widetilde{\pi}_2(2x;q)-\widetilde{\pi}_2(x;q)$ and $\widetilde{\pi}_2^\flat(x;q,a)=\widetilde{\pi}_2(2x;q,a)-\widetilde{\pi}_2(x;q,a)$.
Then we say the sifted $E_2$-numbers have a level of distribution $\theta\in (0,1]$ when the following statement holds.
\begin{assumption}[{$\mathrm{BV}[\theta, \widetilde{\mathcal{E}}_2 ]$}]
For all $\varepsilon>0$ and $A>0$, we have
\[\mathop{\sum_{q\leq x^{\theta-\varepsilon}}}_{q \,\, \mathrm{squarefree}} \max_{(a,q)=1} \left|\widetilde{\pi}_2^\flat(x;q,a)-\frac{\widetilde{\pi}_2^\flat(x;q)}{\varphi(q)} \right| \ll_A \frac{x}{\log^A x}   \]
as $x\rightarrow \infty$.
\end{assumption}

Motohoshi in \cite{Moto} proved that $\mathrm{BV}[1/2,\widetilde{\mathcal{E}}_2]$ holds, and we expect that the Elliott-Halberstam conjecture for sifted $E_2$-numbers $\mathrm{BV}[1,\widetilde{\mathcal{E}}_2]$ holds here as well.
Note that our definitions for level of distributions $\theta$ follow Sono but vary from other authors.
In particular, we can always take $\theta=1/2$ unconditionally and $\theta=1$ is equivalent to assuming the corresponding Elliott-Halberstam conjecture.

Let $\mathcal{H}=\{h_1, h_2, \ldots, h_k\}$ denote a set of $k$ distinct nonnegative integers, and let $n$ denote a positive integer variable.
In order for all $n+h_i$ to be prime simultaneously infinitely often there cannot be a prime $p$ such that the $h_i$ cover all congruence classes modulo $p$ since then we would always have at least one of the $n+h_i$ as a multiple of $p$.
We say $\mathcal{H}$ is \emph{admissible} if for all primes $p$ there is a positive integer $n_0$ (depending on $p$) such that $p$ does not divide any $n_0+h_i$.
Similarly, we say $\mathcal{H}$ is \emph{$E_2$-admissible} if for all pairs of primes $p_{k_1}\leq p_{k_2}$ there is a positive integer $n_0$ (depending on $p_{k_1}p_{k_2}$) such that $p_{k_1}p_{k_2}$ does not divide any $n_0+h_i$.
The Hardy-Littlewood $k$-tuple conjectures imply that if $\mathcal{H}$ is admissible (respectively $E_2$-admissible), then there are infinitely many $n$ such that $n+h_1$,  $n+h_2$, $\ldots$,  $n+h_k$ are all primes/sifted $E_2$-numbers (respectively unsifted $E_2$-numbers).
We call $h_k-h_1$ the \emph{diameter} of $\mathcal{H}$, and the smallest diameter for an admissible (respectively $E_2$-admissible) set of size $k$ is denoted $H(k)$ (respectively $G(k)$).
For example, $\mathcal{H}=\{0,2,6,8\}$ is admissible with minimal diameter $H(4)=8$, so we expect there are infinitely many positive integers $n$ such that
$n$, $n+2$, $n+6$, $n+8$ are all prime.
Thus we should have $H_3 = H(3+1)=8$ according to Hardy-Littlewood, but the best-known unconditional result is that $H_3\leq 24\,797\,814$ \cite{Poly}.
In general, sieve methods allow us to conclude that for sufficiently large $k$ at least $\nu+1$ of the $n+h_i$ are all primes/sifted $E_2$-numbers infinitely often for $\mathcal{H}$ admissible.
The parity problem inherent in most sieve methods, however, restricts how small $k$ can be for a given $\nu$.
In particular, we must have $k\geq 2\nu+1$ in these cases, and frequently the optimal $k=2\nu+1$ is not attainable.
For example, when $\nu=1$, we can get $\nu+1=2$ of the $n+h_i$ as sifted $E_2$-numbers infinitely often for admissible sets $\mathcal{H}$ of size $k=2\nu+1=3$, but for primes we can take $k=3$ only by assuming a generalized Elliott-Halberstam conjecture.
Likewise, when $\nu=2$, we can take $k=2\nu+1=5$ for sifted $E_2$-numbers only by assuming the Elliott-Halberstam conjecture for primes and $E_2$-numbers, and we can take $k=10$ with no such assumptions \cite{Sono,Gold3}. Hence we get $\widetilde{G}_2 \leq H(5)=12$ conditionally, and we can currently conclude $\widetilde{G}_2 \leq H(10)=32$ unconditionally.

Now we will outline the basic ideas in the sieve methods of Maynard-Tao and Sono. Let $\chi_{\mathbb{S}}(n)$ denote the characteristic function of some set $\mathbb{S}$ of interest like the set of primes $\mathcal{P}$ or sifted $E_2$-numbers $\widetilde{\mathcal{E}}_2$.
Fix a positive integer $\nu$ and an admissible set $\mathcal{H} = \{h_1, h_2, \ldots, h_k\}$ as above.
Consider the sum
\[S(x)=\sum_{x< n \leq 2x} \left(\sum_{i=1}^k \chi_{\mathbb{S}}(n+h_i) - \nu \right)w_n\]
where $w_n$ are nonnegative weights (depending on $\mathcal{H}$ and $n$).
Note that if $S(x)>0$, then we must have at least one $n$ with $x<n\leq 2x$ such that $\nu+1$ or more of the $n+h_i$ are in $\mathbb{S}$.
The standard Selberg weights are of the form
\[w_n = \left(\sum_{d|\prod_{i=1}^k (n+h_i)} \lambda_d \right)^2,\]
while the Maynard-Tao sieve weights give more flexibility by allowing weights to depend on divisors of individual factors $n+h_i$:
\[w_n = \left(\sum_{d_i|n+h_i \forall i} \lambda_{d_1,d_2,\ldots,d_k} \right)^2.\]
Here the $\lambda_{d_1,d_2,\ldots,d_k}$ are given in terms of a smooth function $F\colon \R^k\rightarrow \R$ supported on the region $\mathcal{R}_k$ of points $(x_1, x_2, \ldots, x_k)$ where $x_i \geq 0$ for all $i$ and $x_1 + x_2 + \cdots + x_k \leq 1$.
The sum $S(x)$ is decomposed as $S=S_2-\nu S_1$ where
\[S_1(x) = \sum_{x< n \leq 2x} w_n \]
and
\[S_2(x) = \sum_{x< n \leq 2x} \left(\sum_{i=1}^k \chi_{\mathbb{S}}(n+h_i)\right)w_n.\]
Then one estimates $S_1$, $S_2$ by certain iterated integrals of $F$ in order to conclude that $S_2(x)/S_1(x)>\nu$ for all sufficiently large $x$.
Here we consider a symmetric polynomial $P$ in $k$ variables and take $F$ to have the shape
\[F(x_1,x_2,\ldots, x_k) = \left\{ \begin{array}{ll}
P(x_1, x_2, \ldots, x_k) & \mbox{if } (x_1, x_2, \ldots, x_k) \in \mathcal{R}_k\\
0 & \mbox{otherwise.}
\end{array} \right.\]
Now define quantities $I_k(F)$, $J_k(F)$:
\begin{align*}
    I_k(F) &\mathrel{\mathop:}= \int_0^1 \cdots \int_0^1 F^2 \,dx_1\cdots dx_k, \\
    J_k(F) &\mathrel{\mathop:}= \int_0^1 \cdots \int_0^1 \left( \int_0^1 F \, dx_1 \right)^2 \,dx_2\cdots dx_k.
\end{align*}
\begin{theorem}[Maynard, \cite{Mayn}]
Let $\mathcal{H}=\{h_1,h_2,\ldots,h_k\}$ be an admissible set, and suppose $\theta$ is a level of distribution for the primes.
If $\nu > 0$ is an integer such that
\[\frac{\frac{\theta}{2}kJ_k(F)}{I_k(F)}>\nu\]
for some $F$ as above, then there are infinitely many integers $n$ such that at least $\nu + 1$ of the $n+h_i$ are prime.
\end{theorem}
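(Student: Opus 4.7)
The plan is to follow Maynard's original strategy: construct the nonnegative weights $w_n$ from $F$, asymptotically evaluate $S_1(x)$ and $S_2(x)$, and exploit the hypothesis to force $S(x) = S_2(x) - \nu S_1(x) > 0$ for all $x$ sufficiently large. Whenever $S(x) > 0$, some $n$ in $(x, 2x]$ must have at least $\nu + 1$ of the $n + h_i$ prime, yielding the conclusion. First I would implement the \emph{$W$-trick}: set $W = \prod_{p \le \log\log\log x} p$ and restrict $n$ to a residue class $v_0 \pmod{W}$ with $(v_0 + h_i, W) = 1$ for every $i$, which exists by admissibility of $\mathcal{H}$. Take $R = x^{\theta/2 - \delta}$ for a small $\delta > 0$ and define
\[
\lambda_{d_1,\ldots,d_k} = \left(\prod_{i=1}^k \mu(d_i)\, d_i\right) \sum_{\substack{r_1,\ldots,r_k \\ d_i \mid r_i\ \forall i \\ (\prod_i r_i,\, W) = 1}} \frac{\mu\!\left(\prod_i r_i\right)^2}{\prod_i \varphi(r_i)}\, F\!\left(\frac{\log r_1}{\log R}, \ldots, \frac{\log r_k}{\log R}\right),
\]
so that the support of $F$ on $\mathcal{R}_k$ forces $\prod_i d_i \le R$.

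Next, the evaluation of $S_1(x)$ is essentially combinatorial: after expanding $w_n$ into a sum over pairs $(d_i)$, $(e_i)$, M\"obius inversion together with a Selberg-style diagonalization reduce everything to
\[
S_1(x) \sim \frac{\varphi(W)^k\, x\, (\log R)^k}{W^{k+1}}\, I_k(F).
\]
The term $S_2(x)$ splits as $\sum_{m=1}^k S_2^{(m)}(x)$, where $S_2^{(m)}$ isolates the contribution of $\chi_{\mathcal{P}}(n+h_m)$. For each $m$, only tuples with $d_m = e_m = 1$ contribute, since $n + h_m$ is a prime that exceeds $R$. Applying $\mathrm{BV}[\theta, \mathcal{P}]$ to the inner sums over $n$ in arithmetic progressions of modulus at most $R^2 \le x^{\theta - 2\delta}$ gives
\[
S_2^{(m)}(x) \sim \frac{\varphi(W)^k\, x\, (\log R)^{k+1}}{W^{k+1}\, \log x}\, J_k(F),
\]
where symmetry of $F$ makes the distinguished-variable integral match $J_k(F)$ for every $m$.

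Summing over $m$ and dividing these asymptotics,
\[
\frac{S_2(x)}{S_1(x)} \sim \frac{\log R}{\log x}\cdot \frac{k J_k(F)}{I_k(F)} = \left(\frac{\theta}{2} - \delta\right) \frac{k J_k(F)}{I_k(F)},
\]
and under the hypothesis one picks $\delta > 0$ small enough that this ratio strictly exceeds $\nu$, whence $S(x) > 0$ for all large $x$.

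The hard part will be the asymptotic evaluation of $S_2$: the level of distribution $\mathrm{BV}[\theta, \mathcal{P}]$ must absorb the cumulative error from expanding the squared weight into roughly $R^2$-many arithmetic progressions, while the main term must be reorganized --- via the change of variables $u_i = \log r_i / \log R$ and the asymptotics of the singular series built from $\varphi$ --- into the iterated integral $J_k(F)$. Verifying that boundary contributions where some $r_i$ approach $R$ vanish in the limit is routine but technical; the $W$-trick, while crucial for killing small-prime interference, is by now a standard device.
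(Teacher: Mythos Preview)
The paper does not prove this theorem at all; it is stated with attribution to Maynard \cite{Mayn} and used as a black box. Your sketch is a faithful outline of Maynard's original argument and is correct in its essentials, so there is nothing in the paper to compare it against.
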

To state the analogous result proved by Sono for sifted $E_2$-numbers, we need to define two more integral quantities $L_k(F)$, $M_k(F)$ which depend on $\theta$ and the parameter $\eta$ mentioned above:
\begin{align*}
    L_k(F) &\mathrel{\mathop:}= \int_{\eta}^{\theta/2} \frac{\theta/2-\xi}{\xi (1-\xi)} \int_0^1 \cdots \int_0^1 \left( \int_0^1 F_{\xi} \, dx_1 \right) \left( \int_0^1 F \, dx_1 \right) \,dx_2\cdots dx_k d\xi \\
    M_k(F) &\mathrel{\mathop:}= \int_{\eta}^{\theta/2} \frac{(\theta/2-\xi)^2}{\xi (1-\xi)} \int_0^1 \cdots \int_0^1 \left( \int_0^1 F_{\xi} \, dx_1 \right)^2 \,dx_2\cdots dx_k d\xi 
\end{align*}
where
\[F_{\xi}(x_1, x_2, \ldots, x_k) = F\left(\frac{2\xi}{\theta} + \left(1-\frac{2\xi}{\theta}\right)x_1, x_2, x_3, \ldots, x_k\right).\]
\begin{theorem}[Sono, \cite{Sono}]\label{sonotheorem}
Let $\mathcal{H}=\{h_1,h_2,\ldots,h_k\}$ be an admissible set, and suppose $\theta$ is a common level of distribution for the primes and sifted $E_2$-numbers. If $\nu > 0$ is an integer such that
\begin{equation}\label{sono}
\lim_{\eta\rightarrow 0^+} \frac{-\theta k L_k(F) + \frac{\theta^2}{4} \log \left(\frac{1 - \eta}{\eta}\right) k J_k(F) +  k M_k(F)}{\frac{\theta}{2} I_k(F)} > \nu
\end{equation}
for some $F$ as above, then there are infinitely many integers $n$ such that at least $\nu + 1$ of the $n+h_i$ are sifted $E_2$-numbers.
\end{theorem}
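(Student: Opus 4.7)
The plan is to follow the Maynard-Tao multidimensional sieve strategy, modified for sifted $E_2$-numbers as Sono does. I would begin by choosing the Maynard-Tao weights
\[
w_n = \left(\sum_{d_i \mid n + h_i \, \forall i} \lambda_{d_1,\ldots,d_k}\right)^2,
\]
with $\lambda_{d_1,\ldots,d_k}$ derived from $F$ by M\"obius inversion and supported on squarefree tuples with $\prod_i d_i \le x^{\theta/2}$. Setting $S(x) = S_2(x) - \nu S_1(x)$, it suffices to prove $S(x) > 0$ for all sufficiently large $x$, and the $\lim_{\eta \to 0^+}$ in the statement lets us fix $\eta$ small enough that the hypothesis holds by a definite margin.

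The sum $S_1(x)$ can be computed exactly as in Maynard, yielding an asymptotic proportional to $I_k(F)$ since $S_1$ is insensitive to $\chi_{\mathbb{S}}$. For $S_2(x) = \sum_{i=1}^k S_2^{(i)}(x)$, the essential new ingredient is the Buchstab-type identity
\[
\chi_{\widetilde{\mathcal{E}}_2}(m) = \sum_{\substack{p \mid m \\ x^{\eta} < p \le x^{1/2}}} \chi_{\mathcal{P}}(m/p),
\]
which reduces the count of sifted $E_2$-values of $n + h_i$ to a double sum over a small prime factor $p = x^\xi$ and a large prime $(n+h_i)/p$. For each fixed $p$, the inner sum over $n$ becomes a prime-counting problem modulo $p \cdot \prod_j d_j$, evaluated using $\mathrm{BV}[\theta, \mathcal{P}]$ for the main term (and $\mathrm{BV}[\theta, \widetilde{\mathcal{E}}_2]$ for the related density arising when the large factor itself must lie in a sifted family). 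Partial summation over $p$ via the prime number theorem transforms $\sum_{p = x^\xi} \frac{1}{p \log x}$ into $\int_{\eta}^{\theta/2} d\xi$, while the constraint $p \mid n + h_i$ effectively shrinks the usable Selberg-sieve support in the $i$th variable from $x^{\theta/2}$ to $x^{\theta/2 - \xi}$; by the symmetry of $F$, this translates into the substitution $x_1 \mapsto \tfrac{2\xi}{\theta} + \bigl(1 - \tfrac{2\xi}{\theta}\bigr)x_1$ that produces $F_{\xi}$.

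Expanding the squared Selberg weight against $\chi_{\widetilde{\mathcal{E}}_2}(n+h_i)$ and combining with the sieve density factors produces the three terms in the numerator of \eqref{sono}: the piece $k M_k(F)$ is the diagonal contribution where both factors of the squared weight pick up the shift to $F_{\xi}$, the piece $-\theta k L_k(F)$ is the cross term where exactly one factor is shifted, and $\tfrac{\theta^2}{4}\log\!\left(\tfrac{1-\eta}{\eta}\right) k J_k(F)$ is a boundary correction whose divergent logarithmic coefficient traces back to $\int_{\eta}^{\theta/2} \frac{d\xi}{\xi(1-\xi)} = \log\!\left(\tfrac{1-\eta}{\eta}\right)$. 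The main obstacle will be the bookkeeping: verifying that the divisor sums remain within Bombieri-Vinogradov range once $p$ is absorbed into the modulus, and that the recombination of sieve densities produces precisely the stated weight factors $\tfrac{\theta/2 - \xi}{\xi(1-\xi)}$ and $\tfrac{(\theta/2-\xi)^2}{\xi(1-\xi)}$ in $L_k$ and $M_k$. Once $S_2(x)$ is fully evaluated, the inequality $S_2(x)/S_1(x) > \nu$ matches the hypothesis in \eqref{sono}, and positivity of $S(x)$ then forces at least one $n \in (x, 2x]$ with $\nu + 1$ of the $n + h_i$ being sifted $E_2$-numbers; letting $x \to \infty$ yields the claimed infinitude.
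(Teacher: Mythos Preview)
The paper does not contain a proof of this statement at all: \cref{sonotheorem} is quoted from Sono's work \cite{Sono} and used as a black box (the paper's contribution is the explicit evaluation of $L_k(F)$ and $M_k(F)$ in \cref{tildetheorem}, not a rederivation of the sieve result itself). So there is no ``paper's own proof'' against which to compare your proposal.

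That said, your sketch is a reasonable high-level outline of the argument in \cite{Sono}, and the broad strokes --- Maynard--Tao weights, decomposition $S = S_2 - \nu S_1$, Buchstab-type expansion of $\chi_{\widetilde{\mathcal{E}}_2}$, absorption of the small prime factor into the sieve support producing the rescaling $F_\xi$, and partial summation over the small prime yielding the $\xi$-integrals --- are correct in spirit. Two cautions, though. First, your attribution of the three numerator terms to ``diagonal,'' ``cross,'' and ``boundary'' pieces of the squared weight is not quite how the combinatorics in \cite{Sono} actually falls out; the decomposition there arises from a more delicate change of variables in the smoothed sieve sums, and the specific weight factors $(\theta/2-\xi)/(\xi(1-\xi))$ and $(\theta/2-\xi)^2/(\xi(1-\xi))$ come from the Jacobian of the rescaling combined with the density of primes of size $x^\xi$, not from a diagonal/off-diagonal split. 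Second, your statement that $\mathrm{BV}[\theta,\widetilde{\mathcal{E}}_2]$ is used ``when the large factor itself must lie in a sifted family'' is vague; in fact the $E_2$ level-of-distribution hypothesis enters directly in controlling the error when one opens $\chi_{\widetilde{\mathcal{E}}_2}(n+h_i)$ inside $S_2$ before any Buchstab step. If you intend to write this up in full, you should consult \cite{Sono} directly rather than reconstruct the argument from the present paper, which deliberately omits it.
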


\section{Explicit Computations}

Let $P_i = x_1^i + x_2^i + \cdots + x_k^i$ denote the $i$th power sum symmetric polynomial in $k$ variables.
Maynard computed $I_k(F)$ and $J_k(F)$ explicitly when $P$ has a special form.
\begin{assumption}\label{assume}
Suppose the symmetric polynomial $P$ is of the form
\[P = \sum_{i=0}^n a_i (1-P_1)^{b_i}P_2^{c_i}\]
where the $a_i$ are real constants and $b_i$, $c_i$ are nonnegative integers.
\end{assumption}
Central to these calculations are the beta function identity $\int_0^1 (1-u)^bu^c\, du = b!c!/(b+c+1)!$ and certain polynomials $Q_c(x)$ of degree $c$ with $Q_0(x) = 1$ and for $c>1$ are given by
\[Q_c(x) = c! \sum_{r=1}^c \binom{x}{r} \mathop{\sum_{c_1,\ldots,c_r}}_{c_1+\cdots+c_r=c} \prod_{i=1}^r \frac{(2c_i)!}{c_i!}. \]
For example, $Q_1(x) = 2x$, $Q_2(x) = 20x+4x^2$, $Q_3(x) = 592x+120x^2+8x^3$, $Q_4(x)=33888x+5936x^2+480x^3+16x^4$.
\begin{theorem}[Maynard, \cite{Mayn}]\label{Maynard}
Under \cref{assume}, we have
\[I_k(F) = \sum_{0\leq i,j \leq n} a_i a_j \frac{(b_i+b_j)!Q_{c_i+c_j}(k)}{(k+b_i+b_j+2c_i+2c_j)!} \]
and
\[J_k(F) = \sum_{0\leq i,j \leq n} a_i a_j \sum_{c_1'=0}^{c_i}\sum_{c_2'=0}^{c_j}\binom{c_i}{c_1'}\binom{c_j}{c_2'}\frac{\gamma_{i,j,c_1',c_2'} Q_{c_1'+c_2'}(k-1)}{(k+b_i+b_j+2c_i+2c_j+1)!}\]
where
\[\gamma_{i,j,c_1',c_2'} = \frac{b_i!b_j!(2c_i-2c_1')!(2c_j-2c_2')!(b_i+b_j+2c_i+2c_j-2c_1'-2c_2'+2)!}{(b_i+2c_i-2c_1'+1)!(b_j+2c_j-2c_2'+1)!}.\]
\end{theorem}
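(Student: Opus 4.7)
The plan is to reduce both formulas to a single workhorse identity, the Dirichlet integral
\[
\int_{\mathcal{R}_k} (1-x_1-\cdots-x_k)^{\alpha_0}\, x_1^{\alpha_1}\cdots x_k^{\alpha_k}\,dx_1\cdots dx_k = \frac{\alpha_0!\,\alpha_1!\cdots\alpha_k!}{(k+\alpha_0+\alpha_1+\cdots+\alpha_k)!},
\]
which follows by iterated use of the beta-function identity quoted just before the theorem. By \cref{assume} and bilinearity, $I_k(F)$ is a sum over $(i,j)$ of the scalars $a_ia_j$ times $\int_{\mathcal{R}_k} (1-P_1)^{b_i+b_j}P_2^{c_i+c_j}$, so it suffices to compute $\int_{\mathcal{R}_k} (1-P_1)^b P_2^{\,c}$ for generic exponents $b,c$.

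To carry out that integral, expand $P_2^{\,c}=(x_1^2+\cdots+x_k^2)^c$ by the multinomial theorem as a sum of $\frac{c!}{c_1!\cdots c_k!} x_1^{2c_1}\cdots x_k^{2c_k}$ with $c_1+\cdots+c_k=c$, and apply the Dirichlet integral to each monomial. The common denominator is $(k+b+2c)!$ and the common factor $b!$ pulls out, leaving
\[
\frac{b!}{(k+b+2c)!}\sum_{c_1+\cdots+c_k=c} \frac{c!\,(2c_1)!\cdots(2c_k)!}{c_1!\cdots c_k!}.
\]
Regrouping by the number $r\in\{1,\ldots,c\}$ of indices with $c_i\geq 1$ yields $\binom{k}{r}$ choices of those indices, so the inner sum reduces to $c!\sum_{r=1}^{c}\binom{k}{r}\sum_{c_1+\cdots+c_r=c}\prod_{i=1}^r (2c_i)!/c_i!$, which is precisely $Q_c(k)$. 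This proves the formula for $I_k(F)$ and, as a sanity check, specializing to $c\in\{1,2,3,4\}$ reproduces the explicit $Q_c$ listed in the excerpt.

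For $J_k(F)$, first perform the inner integration in $x_1$. Setting $S=x_2+\cdots+x_k$ and $T=x_2^2+\cdots+x_k^2$, write $(1-P_1)^{b_i}P_2^{c_i}=(1-S-x_1)^{b_i}(T+x_1^2)^{c_i}$, expand the second factor binomially as $\sum_{c_1'=0}^{c_i}\binom{c_i}{c_1'}T^{c_1'}x_1^{2c_i-2c_1'}$, and integrate each term on $[0,1-S]$ using $\int_0^{1-S}(1-S-x_1)^b x_1^{m}\,dx_1=\frac{b!\,m!}{(b+m+1)!}(1-S)^{b+m+1}$. The result expresses $\int_0^1 a_i(1-P_1)^{b_i}P_2^{c_i}\,dx_1$ as a linear combination of terms $T^{c_1'}(1-S)^{b_i+2c_i-2c_1'+1}$ with explicit coefficients.

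Multiplying the two such single-variable integrals produces cross-terms proportional to $T^{c_1'+c_2'}(1-S)^{b_i+b_j+2c_i+2c_j-2c_1'-2c_2'+2}$, and the remaining $(k-1)$-fold integration over $(x_2,\ldots,x_k)\in\mathcal{R}_{k-1}$ is structurally identical to the $I_k$ computation with $k$ replaced by $k-1$, $P_2$ replaced by $T$, and $(1-P_1)$ replaced by $(1-S)$. Running the same multinomial-plus-Dirichlet argument once more contributes a factor of $Q_{c_1'+c_2'}(k-1)$ and the factorial $(b_i+b_j+2c_i+2c_j-2c_1'-2c_2'+2)!$ in the numerator; collecting all the constants from the two stages gives exactly the quantity $\gamma_{i,j,c_1',c_2'}$ defined in the statement, divided by $(k+b_i+b_j+2c_i+2c_j+1)!$. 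The only genuine combinatorial content is the grouping-by-support identification of the multinomial sum with $Q_c(k)$; the main obstacle is simply keeping the factorial bookkeeping straight across the two nested applications of the Dirichlet formula.
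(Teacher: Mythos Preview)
The paper does not include its own proof of this theorem; it is quoted as Maynard's result and simply cited to \cite{Mayn}. Your proposal is correct and is essentially Maynard's original argument: it is exactly the Dirichlet--beta/multinomial computation that the paper invokes (without proof) as \cref{first} in \cref{Glem}, and the same inner-$x_1$ expansion via $P_2^{c_i}=(P_2'+x_1^2)^{c_i}$ that the paper carries out in detail in the proof of \cref{tildetheorem}. So there is nothing to compare beyond noting that your write-up matches both Maynard's method and the paper's own use of that method elsewhere.
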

Sono did not derive explicit calculations for $L_k(F)$ and $M_k(F)$ as Maynard did for $I_k(F)$ and $J_k(F)$, but we will do so here.
First, we need a lemma.
\begin{lemma}\label{Glem}
We have
     \begin{equation}\label{first}
        \mathop{\idotsint}_{\mathcal{R}_k} (1-P_1)^b P_2^c \, dx_1\cdots d_k = \frac{b! Q_c(k)}{(k+2c+b)!} 
     \end{equation}
     and
     \begin{equation}\label{second}
         \mathop{\idotsint}_{\mathcal{R}_k} P_1^b P_2^c \, dx_1\cdots d_k = \frac{Q_c(k)}{(k+2c-1)!(k+2c+b)}.
     \end{equation}
\end{lemma}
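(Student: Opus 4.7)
For the first identity, I would start by expanding
\[
P_2^c = \sum_{c_1+\cdots+c_k=c}\binom{c}{c_1,\dots,c_k}\,x_1^{2c_1}\cdots x_k^{2c_k}
\]
via the multinomial theorem and integrating each monomial against $(1-P_1)^b$ using the standard Dirichlet integral
\[
\idotsint_{\mathcal{R}_k}(1-P_1)^b\,x_1^{e_1}\cdots x_k^{e_k}\,dx_1\cdots dx_k = \frac{b!\,e_1!\cdots e_k!}{(k+b+e_1+\cdots+e_k)!},
\]
which follows by iterating the beta-function identity cited above (peeling off one $x_i$ at a time after the substitution $x_i = (1-x_{i+1}-\cdots-x_k)u$). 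After pulling the common factor $b!/(k+b+2c)!$ outside the sum, the desired formula reduces to the purely combinatorial claim
\[
Q_c(k) = c!\sum_{c_1+\cdots+c_k=c}\prod_{i=1}^k \frac{(2c_i)!}{c_i!}.
\]
I would establish this by grouping multi-indices according to the number $r$ of strictly positive $c_i$'s: each composition of $c$ into $r$ positive parts occurs in $\binom{k}{r}$ distinct placements among the $k$ slots, the zero slots contribute factors of $(2\cdot 0)!/0!=1$, and the remaining expression recovers the definition of $Q_c(k)$ verbatim (with the base case $Q_0(k)=1$ corresponding to the empty product).

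For the second identity, the cleanest route is a homogeneity argument rather than a second direct expansion. The region $\mathcal{R}_k$ is the cone with apex $0$ over the unit simplex $\Delta=\{z\in\R^k:z_i\geq 0,\,\sum z_i=1\}$, so the change of variables $x=tz$ with $(t,z)\in[0,1]\times\Delta$ converts integration over $\mathcal{R}_k$ into integration over $[0,1]\times\Delta$. Parameterizing $\Delta$ by $(z_1,\ldots,z_{k-1})$ with $z_k=1-\sum_{i<k}z_i$, a direct determinant calculation (add the first $k-1$ rows to the last and expand) shows that the Jacobian equals $t^{k-1}$. Since $P_1^b P_2^c$ is homogeneous of degree $b+2c$ in the $x_i$ and $P_1\equiv 1$ on $\Delta$, this yields
\[
\idotsint_{\mathcal{R}_k}P_1^b P_2^c\,dx_1\cdots dx_k = \left(\int_0^1 t^{k+b+2c-1}\,dt\right)C(c,k) = \frac{C(c,k)}{k+b+2c},
\]
where $C(c,k)$ depends only on $c$ and $k$. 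Setting $b=0$ and invoking the first identity (already proved) gives $C(c,k)=(k+2c)\cdot Q_c(k)/(k+2c)! = Q_c(k)/(k+2c-1)!$, which immediately produces \cref{second}.

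The only nontrivial step is the combinatorial matching in the first identity, namely verifying that the plain multinomial sum regroups into the $\binom{k}{r}$-weighted sum over strictly positive compositions that appears in the definition of $Q_c(k)$, and confirming the $c=0$ boundary case. Once \cref{first} is in hand, the second identity is essentially free from the scaling argument and does not require any fresh combinatorics or the alternative expansion $P_1^b = (1-(1-P_1))^b$.
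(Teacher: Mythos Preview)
Your argument is correct. For \cref{first} you essentially reconstruct Maynard's computation (multinomial expansion plus the Dirichlet integral, followed by regrouping the $k$-fold composition sum by the number of nonzero parts), which is exactly what the paper defers to by citing \cite{Mayn}. The combinatorial matching you flag as the ``only nontrivial step'' is routine and your description is accurate, including the $c=0$ base case.

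For \cref{second} you take a genuinely different route from the paper. The paper writes $P_1^b = (1-(1-P_1))^b$, expands binomially, applies \cref{first} term by term, and then collapses the resulting alternating sum via the identity $\sum_{j=m}^n(-1)^j\binom{n}{j}=(-1)^m\binom{n-1}{n-m}$. Your scaling argument sidesteps all of that: the cone structure $\mathcal{R}_k\cong [0,1]\times\Delta$ together with the homogeneity of $P_1^bP_2^c$ isolates the $b$-dependence as a single factor $1/(k+2c+b)$, and the remaining constant is read off from the $b=0$ case of \cref{first}. This is cleaner and more conceptual---it explains \emph{why} the answer depends on $b$ only through the linear factor $k+2c+b$ in the denominator---whereas the paper's approach is purely algebraic and requires recognising the right binomial identity to telescope the sum. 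Either method works; yours gives more insight, the paper's stays entirely within the framework already set up for \cref{first}.
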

\begin{proof}
\cref{first} was shown in \cite{Mayn}, and we will use this to prove \cref{second}. We start with the binomial theorem and then simplify by applying the identity $\sum_{j=m}^n(-1)^j\binom{n}{j} = (-1)^m\binom{n-1}{n-m}$ for $0<m\leq n$ 
to get
\begin{align*}
    & \mathop{\idotsint}_{\mathcal{R}_k} (1-(1-P_1))^b P_2^c \, dx_1\cdots d_k \\
    =& \sum_{b_1=0}^b(-1)^{b_1}\binom{b}{b_1}\mathop{\idotsint}_{\mathcal{R}_k} (1-P_1)^{b_1} P_2^c \, dx_1\cdots d_k \\
    &= \sum_{b_1=0}^b(-1)^{b_1} \binom{b}{b_1}\frac{b_1! Q_c(k)}{(k+2c+b_1)!} \\
     &= Q_{c}(k)\frac{b!(-1)^{k}}{(k+2c+b)!}\sum_{b_1=0}^b(-1)^{k+2c+b_1}\binom{k+2c+b}{k+2c+b_1} \\
     &= Q_{c}(k)\frac{b!}{(k+2c+b)!}\binom{k+2c+b-1}{b} \\
     &=Q_c(k)\frac{1}{(k+2c-1)!(k+2c+b)}.
\end{align*}
\end{proof}
\begin{definition}
Now define limits
\[\widetilde{L}_k(F) = \lim_{\eta\rightarrow 0^+} \left(L_k(F) - \frac{\theta}{2} \log\left( \frac{1-\eta}{\eta(\frac{2}{\theta} - 1)}\right) J_k(F)\right)\]
and 
\[\widetilde{M}_k(F) = \lim_{\eta\rightarrow 0^+} \left(M_k(F) - \frac{\theta^2}{4} \log\left( \frac{1-\eta}{\eta(\frac{2}{\theta} - 1)}\right) J_k(F)\right).\]
Then the limit of the numerator on the left-hand side of the inequality in \cref{sono} becomes
\begin{align*}
    \widetilde{J}_k(F) &\mathrel{\mathop :}= \lim_{\eta\rightarrow 0^+} \left(-\theta k L_k(F) + \frac{\theta^2}{4} \log \left(\frac{1 - \eta}{\eta}\right) k J_k(F) +  k M_k(F)\right) \\
    &= -\theta k \widetilde{L}_k(F) + \frac{\theta^2}{4} \log \left(\frac{2}{\theta}-1\right) k J_k(F) +  k \widetilde{M}_k(F).
\end{align*}
\end{definition}
\begin{theorem}\label{tildetheorem}
Under \cref{assume}, we have
\begin{gather*}
\widetilde{L}_k(F) = \sum_{0\leq i,j \leq n} a_i a_j \sum_{c_1'=0}^{c_i}\sum_{c_2'=0}^{c_j}\binom{c_i}{c_1'}\binom{c_j}{c_2' } Q_{c_1' + c_2'}(k - 1) \\
 \cdot \left(\sum_{d_1=0}^{\tilde{d}}  \frac{(-1)^{d_1}\binom{\tilde{d}}{d_1}\delta_{i, j, c_1', c_2'}\lambda_{k - 1 + d_1 + 2 c_1' + 2 c_2'} }{(k + 2 c_1' + 2 c_2' - 2)! (k - 1 + 2 c_1' + 2 c_2' + d_1)} \right.\\
\left.-\sum_{b_1'=0}^{b_i}\sum_{e_1=0}^{\tilde{e}} \frac{(-1)^{e_1 + b_1'}\binom{\tilde{e}}{e_1}\binom{b_i}{b_1'}\varepsilon_{i, j, c_1', c_2', b_1'}\mu_{b_1' + 2 c_i - 2c_1' + 1, k - 1 + e_1 + 2 c_1' + 2 c_2'}}{(k + 2 c_1' + 2 c_2' - 2)!(k - 1 + 2 c_1' + 2 c_2' + e_1)} \right)
 \end{gather*}
and
\begin{gather*}
\widetilde{M}_k(F) =\frac{\theta}{2} \sum_{0\leq i,j \leq n} a_i a_j \sum_{c_1'=0}^{c_i}\sum_{c_2'=0}^{c_j}\binom{c_i}{c_1'}\binom{c_j}{c_2' }Q_{c_1' + c_2'}(k - 1)  \\
 \cdot\left(\sum_{d_1=0}^{\tilde{d}} \frac{ (-1)^{d_1} \binom{\tilde{d}}{d_1}\delta_{i, j, c_1', c_2'} \lambda_{k - 1 + d_1 + 2 c_1' + 2 c_2'}}{(k + 2 c_1' + 2 c_2' - 2)! (k - 1 + 2 c_1' + 2 c_2' + 
         d_1)} \right. \\ 
  \left.-2\sum_{b_1'=0}^{b_i} \sum_{e_1=0}^{\tilde{e}} \frac{(-1)^{e_1 + b_1'}\binom{\tilde{e}}{e_1}\binom{b_i}{b_1'} \varepsilon_{i, j, c_1', c_2', b_1'} \mu_{b_1' + 2 c_i - 2 c_1' + 1, k - 1 + e_1 + 2 c_1' + 2 c_2'}}{(k + 2 c_1' + 2 c_2' - 2)! (k - 1 + 2 c_1' + 2 c_2' + e_1)} \right. \\ 
  \left. +\sum_{b_1'=0}^{b_i} \sum_{b_2'=0}^{b_j} \sum_{f_1=0}^{\tilde{f}}
     \frac{(-1)^{f_1 + b_1' + b_2'}\binom{\tilde{f}}{f_1}\binom{b_i}{b_1'}\binom{b_j}{b_2'}
    \mu_{2 c_i + 2 c_j - 2 c_1' - 2 c_2' + b_1' + b_2' + 2, 
     k - 1 + f_1 + 2 c_1' + 2 c_2'} }{\zeta_{i,j,c_1',c_2',b_1',b_2'}(k + 2 c_1' + 2 c_2' - 2)! (k - 1 + 2 c_1' + 2 c_2' + f_1)}\right)
\end{gather*}
where $\tilde{d}=b_i + b_j + 2 c_i + 2 c_j - 2 c_1' - 2 c_2' + 2$, $\tilde{e}=b_i - b_1' + b_j + 2 c_j - 2 c_2' + 1$, $\tilde{f}= b_i + b_j - b_1' - b_2'$,
\[\delta_{i,j,c_1',c_2'} = \frac{b_i! b_j! (2 c_i - 2 c_1')! (2 c_j - 
     2 c_2')!}{(b_i + 2 c_i - 2 c_1' + 1)! (b_j + 2 c_j - 
       2 c_2' + 1)!},\]
       \[\varepsilon_{i, j, c_1', c_2', b_1'} = \frac{b_j! (2 c_j - 
      2 c_2')!}{(b_j + 2 c_j - 2 c_2' + 1)!(b_1' + 2 c_i - 2 c_1' + 
       1)},\]
       \[\zeta_{i,j,c_1',c_2',b_1',b_2'} = (b_1' + 2 c_i - 2 c_1' + 1)(b_2' + 2 c_j - 2 c_2' + 1),\]
       \[\lambda_n = \frac{\theta}{2}\left(\frac{\theta}{2} \frac{{}_2F_1(1, 1; n + 2; \theta/2)}{n + 1} -
    H_n + \log\left(1 - \frac{\theta}{2}\right)\right),\]
and
       \[\mu_{m,n}=\frac{\theta}{2}
 \frac{{}_2F_1(1, m; m + n + 1; \theta/2)}{m \binom{m + n}{n}}.\]
 Here ${}_2F_1(a,b;c;z)$ denotes the standard hypergeometric function and $H_n$ denotes the $n$th harmonic number.
\end{theorem}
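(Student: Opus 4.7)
The plan is to reduce $L_k(F)$ and $M_k(F)$ to iterated polynomial integrals, isolate the logarithmically divergent parts that match the subtractions in the definitions of $\widetilde{L}_k$ and $\widetilde{M}_k$, and then evaluate the remaining integrals via Lemma~\ref{Glem} together with the integral representation of the hypergeometric function. First, with $s = 2\xi/\theta$, the substitution $u = s + (1-s)x_1$ yields the key identity $\int_0^1 F_\xi\, dx_1 = (1-s)^{-1}\int_s^1 F(u, x_2, \ldots, x_k)\, du$, which vanishes unless $P_1' = x_2 + \cdots + x_k \le 1-s$. Combined with $\theta/2 - \xi = (\theta/2)(1-s)$, the prefactors in $L_k$ and $M_k$ collapse to $\theta/(2\xi(1-\xi))$ and $\theta^2/(4\xi(1-\xi))$, respectively. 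Writing $\int_s^1 F\, du = B^p - A^p$, where $A^p$ and $B^p$ denote the polynomial expressions for $\int_0^s F\, du$ and $\int_0^1 F\, du$, one checks that the $J_k(F)$-proportional part of the integrand gives exactly $\tfrac{\theta}{2}\log\tfrac{1-\eta}{\eta(\frac{2}{\theta}-1)}\, J_k(F)$ and $\tfrac{\theta^2}{4}\log\tfrac{1-\eta}{\eta(\frac{2}{\theta}-1)}\, J_k(F)$ for $L_k$ and $M_k$, matching the subtractions defining $\widetilde{L}_k$ and $\widetilde{M}_k$. What remains is
\[
\widetilde{L}_k(F) = -\frac{\theta}{2}\int_0^{\theta/2}\frac{d\xi}{\xi(1-\xi)}\left[\int_{P_1'\le 1-s} A^p B^p\, dx_2\cdots dx_k + \int_{1-s<P_1'\le 1}(B^p)^2\, dx_2\cdots dx_k\right],
\]
with an analogous three-term decomposition for $\widetilde{M}_k$ coming from $(B^p-A^p)^2 = (B^p)^2 - 2 A^p B^p + (A^p)^2$.

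Each region-restricted integral is then computed via the substitution $x_i = (1-s)y_i$ for $i = 2, \ldots, k$, which maps $\{P_1'\le 1-s\}$ bijectively onto the simplex $\mathcal{R}_{k-1}$ in the $y$-variables with Jacobian $(1-s)^{k-1}$. Under this substitution, $1-P_1' = 1-(1-s)\tilde{P}_1 = (1-\tilde{P}_1) + s\tilde{P}_1$, where $\tilde{P}_i$ is the $i$-th power sum in the $y$-variables. Expanding $[(1-\tilde{P}_1) + s\tilde{P}_1]^m$ by the binomial theorem, then using Assumption~\ref{assume} and the beta-function identity to expand $A^p$ and $B^p$, and finally applying the second formula of Lemma~\ref{Glem} in the form $\int_{\mathcal{R}_{k-1}}\tilde{P}_1^l \tilde{P}_2^{c_1'+c_2'}\, dy = Q_{c_1'+c_2'}(k-1)/[(k+2c_1'+2c_2'-2)!\,(k-1+2c_1'+2c_2'+l)]$, produces the characteristic $Q_{c_1'+c_2'}(k-1)$ and $(k+2c_1'+2c_2'-2)!$ factors appearing throughout the theorem. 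Specifically, $\int_{1-s<P_1'\le 1}(B^p)^2$ becomes a sum over $d_1 \in [0, \tilde{d}]$ whose integrand carries $1-(1-s)^{k-1+d_1+2c_1'+2c_2'}$; $\int_{P_1'\le 1-s}A^p B^p$ becomes a double sum over $b_1' \in [0, b_i]$ and $e_1 \in [0, \tilde{e}]$ with factor $s^{b_1'+2c_i-2c_1'+1}(1-s)^{k-1+e_1+2c_1'+2c_2'}$; and $\int_{P_1'\le 1-s}(A^p)^2$ becomes a triple sum over $b_1', b_2', f_1$ with factor $s^{b_1'+b_2'+2c_i+2c_j-2c_1'-2c_2'+2}(1-s)^{k-1+f_1+2c_1'+2c_2'}$.

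The final $\xi$-integration uses the integral representation $\mu_{m,n} = \frac{\theta}{2}\int_0^1 \frac{t^{m-1}(1-t)^n}{1-(\theta/2)t}\, dt = \int_0^{\theta/2}\frac{s^{m-1}(1-s)^n}{1-\xi}\, d\xi$, giving $\int_0^{\theta/2} s^M(1-s)^N/[\xi(1-\xi)]\, d\xi = (2/\theta)\mu_{M,N}$ for $M \ge 1$, which produces the $\mu$-terms in both formulas. For the singular $M = 0$ case, partial fractions on $1/[\xi(1-\xi)]$ combined with the classical identity $\sum_{k=1}^N (-1)^{k-1}\binom{N}{k}/k = H_N$ give $\int_0^{\theta/2}[1-(1-s)^N]/[\xi(1-\xi)]\, d\xi = H_N - \log(1-\theta/2) - \mu_{1,N} = -(2/\theta)\lambda_N$, producing the $\lambda$-terms $\lambda_{k-1+d_1+2c_1'+2c_2'}$. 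Thus for $\widetilde{L}_k$ the $\int_{1-s<P_1'\le 1}(B^p)^2$ piece gives the $\lambda$-terms with coefficient $\delta_{i,j,c_1',c_2'}$ while the $\int A^p B^p$ piece gives the $\mu$-terms with $\varepsilon_{i,j,c_1',c_2',b_1'}$; for $\widetilde{M}_k$ the three pieces from $(B^p)^2$, $-2A^pB^p$, and $(A^p)^2$ give respectively the $\lambda$-term, the $-2\mu$ cross term, and the double-$\mu$ sum with $\zeta_{i,j,c_1',c_2',b_1',b_2'}$.

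The principal obstacle is the elaborate combinatorial bookkeeping required to align the nested binomial expansions and the many sum indices $(i, j, c_1', c_2', b_1', b_2', d_1, e_1, f_1)$ with the stated ranges $\tilde{d}, \tilde{e}, \tilde{f}$ and coefficients $\delta, \varepsilon, \zeta$. A supporting beta-function identity such as $\sum_{d_1=0}^{\tilde{d}}(-1)^{d_1}\binom{\tilde{d}}{d_1}/(k-1+2c_1'+2c_2'+d_1) = (k+2c_1'+2c_2'-2)!\,\tilde{d}!/(k+b_i+b_j+2c_i+2c_j+1)!$ is needed to verify that the $J_k$-proportional log divergence cancels exactly as claimed; no single step is conceptually deep, but the index-matching demands patient care throughout.
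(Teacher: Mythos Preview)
Your proposal is correct and follows essentially the same path as the paper's proof: the same substitution $s=x_0=2\xi/\theta$, $\tilde{x}_1=x_0+(1-x_0)x_1$, the same binomial/beta evaluation of the inner $x_1$-integrals, the same use of homogeneity together with Lemma~\ref{Glem} for the $(x_2,\ldots,x_k)$-integrals over the restricted simplex, and the same Euler-integral identification of the remaining $\xi$-integrals with $\lambda_n$ and $\mu_{m,n}$. The only difference is organizational---you peel off the $J_k(F)$ log-divergence up front by writing $\int_{P_1'\le 1-s}(B^p)^2=J_k(F)-\int_{1-s<P_1'\le 1}(B^p)^2$, whereas the paper carries the full $(1-x_0)^n$ integrands through and absorbs the subtraction into the definition of $\lambda_n$ at the end---and there is one small slip to fix: to obtain pure $\tilde{P}_1^{e_1}$ factors (so that the \emph{second} formula of Lemma~\ref{Glem} applies directly and your claimed $(1-s)^{k-1+e_1+2c_1'+2c_2'}$ power emerges) you should expand $[1-(1-s)\tilde{P}_1]^m$ in powers of $(1-s)\tilde{P}_1$, not as $[(1-\tilde{P}_1)+s\tilde{P}_1]^m$, since the latter expansion leaves residual $(1-\tilde{P}_1)^{m-l}$ factors.
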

\begin{proof}
First, we make a change of variables $x_0 = 2\xi/\theta$ and $\tilde{x}_1=x_0+(1-x_0)x_1$ for the inner integral of $F_{\xi}$. Then $L_k(F)$ becomes
\begin{align*}
\int_{2\eta/\theta}^1 \frac{1}{x_0 (\frac{2}{\theta}-x_0)} \mathop{\idotsint}_{\mathcal{R}'_{k-1}} \left( \int_{x_0}^{1-P_1'} P \, d\tilde{x}_1 \right) \left( \int_0^{1-P_1'} P \, dx_1 \right) \,dx_2\cdots dx_k dx_0    
\end{align*}
where $P_1'=x_2+x_3+\cdots +x_k$ and $\mathcal{R}'_{k-1}$ is the region of points $(x_2, x_3, \ldots, x_k)$ such that $x_i\geq 0$ for all $i$ and $x_0 + P_1'\leq 1$. Likewise $M_k(F)$ becomes
\begin{align*}
\frac{\theta}{2}\int_{2\eta/\theta}^1 \frac{1}{x_0 (\frac{2}{\theta}-x_0)} \mathop{\idotsint}_{\mathcal{R}'_{k-1}} \left( \int_{x_0}^{1-P_1'} P \, d\tilde{x}_1 \right)^2 \,dx_2\cdots dx_k dx_0.
\end{align*}
To evaluate the inner integrals we use the binomial theorem on $P_2^{c_i}$, another substitution $u=\tilde{x}_1/(1-P_1')$, and the beta function identity to get
\begin{align*}
&\int_{x_0}^{1-P_1'} P \, d\tilde{x}_1 = \sum_{i=0}^n a_i \int_{x_0}^{1-P_1'} (1-P_1)^{b_i}P_2^{c_i} \, d\tilde{x}_1 \\
=& \sum_{i=0}^n a_i \sum_{c_1'=0}^{c_i} \binom{c_i}{c_1'} (P_2')^{c_1'} \int_{x_0}^{1-P_1'} (1-P_1)^{b_i}\tilde{x}_1^{2c_i-2c_1'} \, d\tilde{x}_1 \\
=& \sum_{i=0}^n a_i \sum_{c_1'=0}^{c_i} \binom{c_i}{c_1'} (1-P_1')^{b_i+2c_i-2c_1'+1} (P_2')^{c_1'} \int_{x_0/(1-P_1')}^1 (1-u)^{b_i}u^{2c_i-2c_1'} \, du \\
=& \sum_{i=0}^n a_i \sum_{c_1'=0}^{c_i} \binom{c_i}{c_1'} \left(\frac{b_i!(2c_i-2c_1')!}{(b_i+2c_i-2c_1'+1)!} (1-P_1')^{b_i+2c_i-2c_1'+1} (P_2')^{c_1'}\right. \\
&\left. - \sum_{b_1'=0}^{b_i} \frac{(-1)^{b_1'}\binom{b_i}{b_1'}x_0^{b_1'+2c_i-2c_1'+1}}{b_1'+2c_i-2c_1'+1} (1-P_1')^{b_i-b_1'}(P_2')^{c_1'} \right)
\end{align*}
This argument also implies
\begin{align*}
    \int_{0}^{1-P_1'} P \, dx_1 = \sum_{j=0}^n a_j \sum_{c_2'=0}^{c_j} \binom{c_j}{c_2'} \frac{b_j!(2c_j-2c_2')!}{(b_j+2c_j-2c_2'+1)!} (1-P_1')^{b_j+2c_j-2c_2'+1} (P_2')^{c_2'},
\end{align*}
so
\begin{align*}
    &\left( \int_{x_0}^{1-P_1'} P \, d\tilde{x}_1 \right) \left( \int_0^{1-P_1'} P \, dx_1 \right) \\
    =& \sum_{0\leq i,j \leq n} a_i a_j \sum_{c_1'=0}^{c_i}\sum_{c_2'=0}^{c_j}\binom{c_i}{c_1'}\binom{c_j}{c_2'}\left( \delta_{i,j,c_1',c_2'} (1-P_1')^{\tilde{d}}(P_2')^{c_1'+c_2'}\right. \\
    -& \left. \sum_{b_1'=0}^{b_i} (-1)^{b_1'}\binom{b_i}{b_1'}\varepsilon_{i, j, c_1', c_2', b_1'} x_0^{b_1'+2c_i-2c_1'+1} (1-P_1')^{\tilde{e}}(P_2')^{c_1'+c_2'}\right),
\end{align*}
and
\begin{align*}
    &\left( \int_{x_0}^{1-P_1'} P \, d\tilde{x}_1 \right)^2 \\
    =& \sum_{0\leq i,j \leq n} a_i a_j \sum_{c_1'=0}^{c_i}\sum_{c_2'=0}^{c_j}\binom{c_i}{c_1'}\binom{c_j}{c_2'}\left( \delta_{i,j,c_1',c_2'} (1-P_1')^{\tilde{d}}(P_2')^{c_1'+c_2'}\right. \\
    -& \left. 2\sum_{b_1'=0}^{b_i} (-1)^{b_1'}\binom{b_i}{b_1'}\varepsilon_{i, j, c_1', c_2', b_1'} x_0^{b_1'+2c_i-2c_1'+1} (1-P_1')^{\tilde{e}}(P_2')^{c_1'+c_2'}\right.\\
    +&\left. \sum_{b_1'=0}^{b_i} \sum_{b_2'=0}^{b_j}
     \frac{(-1)^{b_1' + b_2'}\binom{b_i}{b_1'}\binom{b_j}{b_2'}x_0^{2 c_i + 2 c_j - 2 c_1' - 2 c_2' + b_1' + b_2' + 2} }{\zeta_{i,j,c_1',c_2',b_1',b_2'}} (1-P_1')^{\tilde{f}}(P_2')^{c_1'+c_2'} \right).
\end{align*}
Next we use homogeneity and then apply \cref{Glem} to obtain the intermediate result
\begin{align*}
   &\mathop{\idotsint}_{\mathcal{R}'_{k-1}} (1-P_1')^{\tilde{b}}(P_2')^{c} \,dx_2\cdots dx_k \\
   =& \sum_{b'=0}^{\tilde{b}}(-1)^{b'}\binom{\tilde{b}}{b'} \mathop{\idotsint}_{\mathcal{R}'_{k-1}} (P_1')^{b'}(P_2')^{c} \,dx_2\cdots dx_k \\
   =&\sum_{b'=0}^{\tilde{b}}(-1)^{b'}\binom{\tilde{b}}{b'} (1-x_0)^{k-1+b'+2c} \mathop{\idotsint}_{\mathcal{R}_{k-1}} (P_1')^{b'}(P_2')^{c} \,dx_2\cdots dx_k \\
   =&\sum_{b'=0}^{\tilde{b}} \frac{(-1)^{b'}\binom{\tilde{b}}{b'} Q_c(k-1)}{(k+2c-2)!(k-1+2c+b')}(1-x_0)^{k-1+b'+2c}. \\
\end{align*}
Combining the previous three equations yields
\begin{gather*}
L_k(F) = \sum_{0\leq i,j \leq n} a_i a_j \sum_{c_1'=0}^{c_i}\sum_{c_2'=0}^{c_j}\binom{c_i}{c_1'}\binom{c_j}{c_2' } Q_{c_1' + c_2'}(k - 1) \\
 \cdot \left(\sum_{d_1=0}^{\tilde{d}}  \frac{(-1)^{d_1}\binom{\tilde{d}}{d_1}\delta_{i, j, c_1', c_2'} \displaystyle{\int_{2\eta/\theta}^1 \frac{(1-x_0)^{k-1+d_1+2c_1'+2c_2'}}{x_0 (\frac{2}{\theta}-x_0)} \, dx_0}}{(k + 2 c_1' + 2 c_2' - 2)! (k - 1 + 2 c_1' + 2 c_2' + d_1)} \right.\\
\left.-\sum_{b_1'=0}^{b_i}\sum_{e_1=0}^{\tilde{e}} \frac{(-1)^{e_1 + b_1'}\binom{\tilde{e}}{e_1}\binom{b_i}{b_1'}\varepsilon_{i, j, c_1', c_2', b_1'} \displaystyle{\int_{2\eta/\theta}^1 \frac{(1-x_0)^{k-1+e_1+2c_1'+2c_2'}}{ x_0^{-b_1'-2c_i+2c_1'}(\frac{2}{\theta}-x_0)} \, dx_0}}{(k + 2 c_1' + 2 c_2' - 2)!(k - 1 + 2 c_1' + 2 c_2' + e_1)} \right).
 \end{gather*}
 and
\begin{gather*}
  M_k(F) = \frac{\theta}{2}\sum_{0\leq i,j \leq n} a_i a_j \sum_{c_1'=0}^{c_i}\sum_{c_2'=0}^{c_j}\binom{c_i}{c_1'}\binom{c_j}{c_2'} Q_{c_1'+c_2'}(k-1)\\
  \cdot \left(  \sum_{d_1=0}^{\tilde{d}} \frac{(-1)^{d_1}\binom{\tilde{d}}{d_1}\delta_{i,j,c_1',c_2'} \displaystyle{\int_{2\eta/\theta}^1 \frac{(1-x_0)^{k-1+d_1+2c_1'+2c_2'}}{x_0 (\frac{2}{\theta}-x_0)}  \, dx_0}  }{(k+2c_1'+1c_2'-2)!(k-1+2c_1'+c_2'+d_1)}\right. \\
    - \left. 2\sum_{b_1'=0}^{b_i} \sum_{e_1=0}^{\tilde{e}} \frac{(-1)^{e_1+b_1'}\binom{\tilde{e}}{e_1} \binom{b_i}{b_1'}\varepsilon_{i, j, c_1', c_2', b_1'} \displaystyle{\int_{2\eta/\theta}^1 \frac{(1-x_0)^{k-1+e_1+2c_1'+2c_2'}}{x_0^{-b_1'-2c_i+2c_1'}  (\frac{2}{\theta}-x_0)}  \, dx_0}}{(k+2c_1'+1c_2'-2)!(k-1+2c_1'+c_2'+e_1)} \right.\\
    +\left. \sum_{b_1'=0}^{b_i} \sum_{b_2'=0}^{b_j} \sum_{f_1=0}^{\tilde{f}}
     \frac{(-1)^{f_1+b_1' + b_2'}\binom{\tilde{f}}{f_1}\binom{b_i}{b_1'}\binom{b_j}{b_2'} \displaystyle{\int_{2\eta/\theta}^1 \frac{(1-x_0)^{k-1+f_1+2c_1'+2c_2'} \, dx_0}{x_0^{-2 c_i - 2 c_j + 2 c_1' + 2 c_2' - b_1' - b_2' -1} (\frac{2}{\theta}-x_0)}} }{\zeta_{i,j,c_1',c_2',b_1',b_2'}(k+2c_1'+1c_2'-2)!(k-1+2c_1'+c_2'+f_1)} \right).
\end{gather*}
To evaluate the integrals in $x_0$ we make use of the formula
\[\frac{{}_2F_1(a,b;c;z)}{b\binom{c-1}{b}} = \int_0^1\frac{t^b (1-t)^{c-b-1}}{t(1-zt)^a}\, dt\]
which is valid when $z$ is non-real or strictly less than 1 and where $b$, $c$ are integers with $c>b>0$. Immediately, we get
\[\lim_{\eta\rightarrow 0^+} \int_{2\eta/\theta}^1\frac{x_0^m (1-x_0)^n}{x_0(\frac{2}{\theta}-x_0)}\, dx_0=\frac{\theta}{2}\frac{{}_2F_1(1,m;m+n+1;\theta/2)}{m\binom{m+n}{m}} = \mu_{m,n}\]
when $m>0$, $n\geq 0$ are integers. Next we evaluate
\begin{align*}
    \int_{2\eta/\theta}^1 \frac{1}{x_0(\frac{2}{\theta}-x_0)}\, dx_0 &=  \frac{\theta}{2}\int_{2\eta/\theta}^1 \frac{1}{x_0}\, dx_0 +\frac{\theta}{2}\int_{2\eta/\theta}^1 \frac{1}{\frac{2}{\theta}-x_0}\, dx_0 \\
    &= -\frac{\theta}{2}\log 2\eta/\theta +\frac{\theta}{2}\log\left( \frac{2/\theta-2\eta/\theta}{2/\theta-1}\right) \\
    &= \frac{\theta}{2} \log\left(\frac{1-\eta}{\eta(\frac{2}{\theta}-1)}\right).
\end{align*}
Lastly,
\begin{align*}
    &\lim_{\eta\rightarrow 0^+} \int_{2\eta/\theta}^1\frac{ (1-x_0)^n-1}{x_0(\frac{2}{\theta}-x_0)}\, dx_0 = \frac{\theta}{2}\int_{0}^1\frac{ (1-x_0)^n-1}{x_0(1-\frac{\theta}{2}x_0)}\, dx_0 \\
    &=\frac{\theta}{2}\left(\int_{0}^1 \frac{\frac{\theta}{2}x_0(1-x_0)^n}{x_0(1-\frac{\theta}{2}x_0)} \, dx_0 + \int_{0}^1 \frac{(1-x_0)^n-1}{x_0} \, dx_0 -\int_{0}^1 \frac{\frac{\theta}{2}}{1-\frac{\theta}{2}x_0} \, dx_0  \right) \\
    &=\frac{\theta}{2}\left(\frac{\theta}{2}\frac{{}_2F_1(1,1;n+2;\theta/2)}{n+1} -H_n +\log\left(1-\frac{\theta}{2}\right)  \right) =\lambda_n.
\end{align*}
\end{proof}

\section{Proof of \texorpdfstring{\cref{maintheorem}}{Theorem 1}}

In this last section, we use \cref{tildetheorem} to establish \cref{maintheorem}, which gives conditional and unconditional bounds on gaps for $\nu+1$ successive sifted $E_2$-numbers as seen in \cref{E2table}.

\begin{proof}[Proof of \cref{maintheorem}]
Assume the symmetric polynomial $P$ defining $F$ satisfies \cref{assume}.
As Maynard noted in \cite{Mayn}, $I_k(F)$ and $J_k(F)$ can be regarded as positive definite quadratic forms with $I_k(F)=\a^T A_I \a$ and $J_k(F) = \a^T A_J \a$ where $A_I$, $A_J$ are real symmetric matrices with coefficient vector $\a=(a_0, a_1, \ldots, a_n)$.
We also take $a_m=0$ when $m>n$ for convenience.
Note that
\begin{align*}
        0 \leq &\int_{\eta}^{\theta/2} \frac{1}{\xi (1-\xi)} \int_0^1 \cdots \int_0^1 \left(\frac{\theta}{2}\int_0^1 F \, dx_1 - \left(\frac{\theta}{2}-\xi\right)\int_0^1 F_{\xi} \, dx_1 \right)^2 dx_2\cdots dx_k d\xi \\
        =& -\theta L_k(F) + \frac{\theta^2}{4} \log \left(\frac{1 - \eta}{\eta}\right) J_k(F) +  M_k(F) - \frac{\theta^2}{4}\log\left(\frac{2}{\theta}-1\right)J_k(F),
\end{align*}
so
\begin{align*}
\widetilde{J}_k(F) \geq \frac{\theta^2}{4}\log\left(\frac{2}{\theta}-1\right)J_k(F)>0.
\end{align*}
Thus $\widetilde{J}_k(F)$ can also be regarded as positive definite quadratic forms in the $a_i$ variables.
In particular, $\widetilde{J}_k(F) = \a^TA_{\widetilde{J}}\a$ for some real symmetric matrix $A_{\widetilde{J}}$.
The ratio
\[R_k(F) \mathrel{\mathop :}= \frac{\widetilde{J}_k(F)}{\frac{\theta}{2}I_k(F)}=\frac{2}{\theta}\cdot\frac{\a^TA_{\widetilde{J}}\a}{\a^T A_I \a}\]
is maximized when $\a$ is an eigenvector for the largest eigenvalue of $A_I^{-1}A_{\widetilde{J}}$.
We use Mathematica to find a numerical approximation to this eigenvector and then rationalize it.
The rational vector $\a$ then gives an exact value for $R_k(F)$.
Recall that Sono's \cref{sonotheorem} states that if $\mathcal{H}$ is admissible and $R_k(F)>\nu$, then there are infinitely many integers $n$ such that at least $\nu + 1$ of the $n+h_i$ are sifted $E_2$-numbers, and thus $\widetilde{G}_\nu \leq H(k)$. 
We fix the exponents in a symmetric polynomial $P = \sum_{i=0}^n a_i(1-P_1)^{b_i}P_2^{c_i}$ as follows:
\begin{align*}
    (b_0,b_1, \ldots)=&(0, 1, 2, 0, 3, 1, 4, 2, 0, 5, 3, 1, 6, 4, 2, 0, 7, 5, 3, 1, 8, \\
    &\, \, 6, 4, 2, 0, 9, 7, 5, 3, 1, 10, 8, 6, 4, 2, 0, 11, 9, 7, 5, 3, 1, \ldots)
\end{align*}
\begin{align*}
(c_0,c_1, \ldots)=&(0, 0, 0, 1, 0, 1, 0, 1, 2, 0, 1, 2, 0, 1, 2, 3, 0, 1, 2, 3, 0, \\
&\, \, 1, 2, 3, 4, 0, 1, 2, 3, 4, 0, 1, 2, 3, 4, 5, 0, 1, 2, 3, 4, 5, \ldots)
\end{align*}
We will only derive the entries in the last two columns of \cref{E2table} from the third row and beyond since the first two columns follow directly from the Hardy-Littlewood conjectures and the first two rows in the last two columns were known previously \cite{Gold1,Sono,Gold3}.

We first assume the Elliott-Halberstam conjecture for primes and sifted $E_2$-numbers, i.e., we assume $\mathrm{BV}[\theta,\mathcal{P}]$ and $\mathrm{BV}[\theta,\widetilde{\mathcal{E}}_2]$ with $\theta=1$. When $k=10$ we take $\a =$
\[ \left(-\frac{2301604465403391652}{124720775947120337501}, -\frac{83833247885835453802}{83847526334133337873}\right)\]
which gives $R_{10}(F) = 3.0353\ldots >3$, so $\widetilde{G}_3 \leq H(10)=32$.
When $k=16$ we take $\a =$
\begin{align*}
&\left(\frac{414930787723574773}{109422307624908766578}, 
-\frac{133690150982074747}{35908140287706839643}, \right.\\
&\,\,\,\,\left.-\frac{56372796939984330619}{56516941370382300823}, 
-\frac{1344287896322269883}{18886517158832273469}\right)
\end{align*}
which gives $R_{16}(F) = 4.000399\ldots > 4$, so $\widetilde{G}_4 \leq H(16)=60$.
When $k=25$ we take $\a =$
\begin{align*}
&\left(\frac{1119449503899613}{53442936550267755953},-\frac{10278507226808306}{142013518853788110707},-\frac{5997977329113581403}{155572764212654119585},\right.\\
&\,\,\,\,-\frac{68226903858709892}{54862996313344202931},-\frac{1768150620457940078}{47081978193596417231},\frac{488025969629313828}{190329612086595601535},\\
    &\,\,\,\,\left.\frac{92289937653
   529035180}{148092942398709354329},\frac{69817571067066097358}{89503691073568190133},\frac{1198016722578081977}{76012521424722835124}\right)
\end{align*}
which gives $R_{25}(F) = 5.0454\ldots > 5$, so $\widetilde{G}_5 \leq H(25)=110$.
When $k=37$ we take $\a =$
\begin{align*}
  &\left(-\frac{21148272786657}{97959366205223913548},\frac{158666306211186}{42114599604634278673},-\frac{29189013259110813}{62481776334191804879},\right.\\
  &\,\,\,\,\frac{668709179903374}{98195305652136904677},-\frac{210824068030837527}{112495425279539043386},-\frac{8711811183996893}{117875551469631705341},\\
  &\,\,\,\,\frac{2204822561042490655}{87523836340436194254},\frac{2659497759715353463}{113400244533928417732},\frac{746783608810176}{32877469237109587567},\\
  &\,\,\,\,\frac{6297052498420029709}{107036774210681509668},\frac{6808672822036122389}{105349407944676030889},-\frac{9244669583898397}{64296513546842911844},\\
  &\,\,\,\,-\frac{18172970779893736836}{60401919992146382861},-\frac{94704601442000848643}{105973862042865144084},-\frac{20860537199538498795}{65309953324114639423},\\
  &\,\,\,\,\left.-\frac{91092595669759901}{56609899602147701511}\right)
\end{align*}
which gives $R_{37}(F) = 6.01020\ldots > 6$, so $\widetilde{G}_6 \leq H(37)=168$.

We now prove our unconditional results, so we have $\mathrm{BV}[\theta,\mathcal{P}]$ and $\mathrm{BV}[\theta,\widetilde{\mathcal{E}}_2]$ with $\theta=1/2$.
When $k=23$, we take $\a=$
\begin{align*}
    &\left(-\frac{16068472196488}{57228638216292482079},-\frac{3190274818823462}{
    153789629695933792839},\frac{32660633679801409}{132753576873497845795},\right.\\
    &\,\,\,\,\frac{695244807497850}{60265813892320495919},-\frac{213821666582879462}{127170093
    693670702511},\frac{79021830385258773}{115850080002404163316},\\
    &\,\,\,\,-\frac{7197058
    8270569106}{74380447200009281471},-\frac{401848789372287506}{890924321964173
    13923},-\frac{18212382651929877}{199652089547425152721},\\
    &\,\,\,\,\frac{520156890779156389}{19253968794695459975},\frac{10432966101974650360}{20007748812639916699
    3},-\frac{304748829126875187}{53355025095470545801},\\
    &\,\,\,\,\frac{1722198035704162915}{145031010654506068979},\frac{6230701253494717243}{159260535299935884030},
    \frac{714109959113406671}{85511027875588422495},\\
    &\,\,\,\,-\frac{32115347349591259}{67
    743025151737099673},-\frac{8989436245766664859}{66755896357240274643},-\frac
    {92753168602654697319}{118950522527495106883},\\
    &\,\,\,\,\left.-\frac{62296007634989205265}{102621101829271040879},-\frac{1081831880065143203}{120385336940627151139}\right)
\end{align*}
which gives $R_{23}(F) = 3.00050654254\ldots > 3$, so $\widetilde{G}_3 \leq H(23)=94$.
When $k=49$, we take $\a=$
\begin{align*}
    &\left(\frac{470254915649}{150765533574864473017},\frac{58858471570404}{103001238314402753165},\frac{1810150485536251}{84223196442630167337},\right.\\
    &\,\,\,\,-\frac{50254705151197}{131826793466385769000},-\frac{46495396759026092}{95943844082109607387},-\frac{7117430940522807}{111194365619194158095},\\
    &\,\,\,\,-\frac{186468266732965278}{99349079194135507801},-\frac{90329128207054573}{74639912346406311005},\frac{3592961826811194}{248254292207525722027},\\
    &\,\,\,\,\frac{3582943779810039096}{269497695369277757771},\frac{4415817421641079238}{144860007068392667251},\frac{268031516354274315}{118239374598256503698},\\
    &\,\,\,\,\frac{6872659514750980001}{108233535286137814372},\frac{18520442052690804867}{190854707294436863261},\frac{2299708104592911281}{124235638427450849950},\\
    &\,\,\,\,-\frac{34267710839976933}{194982267383830416128},-\frac{23310239442223830032}{105543537512637134013},-\frac{43379988238424925840}{54421587852935924737},\\
    &\,\,\,\,\left.-\frac{62334839052322588337}{113763328332070308845},-\frac{1702360154187240957}{64471126631620679318}\right)
\end{align*}
which gives $R_{49}(F) = 4.00096634233\ldots > 4$, so $\widetilde{G}_4 \leq H(49)= 240$.
When $k=102$, we take $\a=$
\begin{align*}
   &\left(-\frac{882233146}{1139042176880719802317},-\frac{161821289743}{9998047
    73957773385326},-\frac{2607084026483}{114581766205785386057},\right.\\
    &\,\,\,\,\frac{443008115
    90}{226430413350338959437},\frac{47245088758784}{89482935901465569711},\frac
    {2463128808672}{49142705373843987011},\\
    &\,\,\,\,\frac{383708250626791}{250347662872812
    34113},\frac{281090342737732}{70565972830750945531},-\frac{2580257542838}{13
    9623938130044468817},\\
    &\,\,\,\,-\frac{25970151895414994}{249757311805137989255},-\frac
    {6245935132576615}{54878844481503296056},-\frac{734253786219396}{13713546743
    1277494703},\\
    &\,\,\,\,-\frac{81930822909258153}{74101640871227538557},-\frac{153992009
    708274910}{76919874292267814749},-\frac{20756487275608320}{88162592296370600
    123},\\
    &\,\,\,\,\frac{93845833779892}{121456342703508411095},\frac{210008201228787809}{
    115122764579812246811},\frac{1700637882880385589}{106825778250695241887},\\
    &\,\,\,\,\frac{1422926881003116425}{175684514660828716683},\frac{23641349956090588}{9740
    2228553403995981},\frac{8838029905873008892}{191027996906978661745},\\
    &\,\,\,\,\frac{97
    00352674133921763}{84082913893918959293},\frac{9972940237541621860}{14130703
    8860775976009},\frac{252539609823204694}{53185506895705874279},\\
    &\,\,\,\,-\frac{168100
    287196548}{13944338424533849939},-\frac{8241924226972183000}{628855272944800
    31429},-\frac{39930616213199365077}{67009050047324008132},\\
    &\,\,\,\,\left.-\frac{94093444717
    554618011}{124799006567988815037},-\frac{10644221080155570763}{5426611784503
    4898582},-\frac{145698639501539687}{36196107909744222989}\right)
\end{align*}
which gives $R_{102}(F) = 5.01623513164\ldots > 5$, so $\widetilde{G}_5 \leq H(102)= 576$.

When $k=225$ we take $\a=$
\begin{align*}
 &\left(\frac{1737257}{95954543521478103735},\frac{1255460898}{69116135215736108219},\frac{118729352089}{103666060477672725912},\right.\\
&\,\,\,\,-\frac{2360932799}{276082226049536939879},-\frac{3794174642695}{60254001723992180064},-\frac{135555397041}{15222266869986803038},\\
&\,\,\,\,-\frac{100839561845411}{57393969844387345619},-\frac{30207171207808}{70897404199127479341},\frac{183690703052}{121618923558261209347},\\
&\,\,\,\,\frac{1683993376733374}{79599001542627410941},\frac{3994726767711341}{156079494715224312915},\frac{1019368532630026}{623356860146201150965},\\
&\,\,\,\,\frac{40126853165073949}{89089847694352722874},\frac{60752656221795202}{125336724400151819393},\frac{3617665576441182}{68127557639016623939},\\
&\,\,\,\,-\frac{2594523081261}{21948908931226042559},-\frac{141935015071364327}{74034597470127700608},-\frac{1252235758489804142}{169772047220954666383},\\
&\,\,\,\,-\frac{327635187034772711}{93721827846594798922},-\frac{5992625674476809}{44917761568520489095},-\frac{2594535248830902098}{62603952620982614591},\\
&\,\,\,\,-\frac{7675220155042301711}{98432436539321022970},-\frac{3563161181500469559}{104232117951925073017},-\frac{210051438309136231}{94568269398642645891},\\
&\,\,\,\,\frac{290693037557815}{84062798134792976444},\frac{18205523506808020735}{88231060093766156417},\frac{157487179721487145901}{226894860616830479587},\\
&\,\,\,\,\left.\frac{49527623795301965830}{74610537934569571063},\frac{18609537355041853051}{115537712162976964912},\frac{409442836501323153}{100258685501749588340}\right)
\end{align*}
which gives $R_{225}(F) = 6.0098418048817\ldots > 6$, so $\widetilde{G}_6 \leq H(225)= 1440$.

\end{proof}

\providecommand{\bysame}{\leavevmode\hbox to3em{\hrulefill}\thinspace}
\providecommand{\MR}{\relax\ifhmode\unskip\space\fi MR }
\providecommand{\MRhref}[2]{%
  \href{http://www.ams.org/mathscinet-getitem?mr=#1}{#2}
}
\providecommand{\href}[2]{#2}


\begin{thebibliography}{GGPY09}

\bibitem[Bom87]{Bomb}
Enrico Bombieri, \emph{Le grand crible dans la th\'{e}orie analytique des
  nombres}, Ast\'{e}risque (1987), no.~18, 103.

\bibitem[EH70]{Elli}
P.~D. T.~A. Elliott and H.~Halberstam, \emph{A conjecture in prime number
  theory}, Symposia {M}athematica, {V}ol. {IV} ({INDAM}, {R}ome, 1968/69),
  Academic Press, London, 1970, pp.~59--72.

\bibitem[GGPY09]{Gold1}
D.~A. Goldston, S.~W. Graham, J.~Pintz, and C.~Y. Y{\i}ld{\i}r{\i}m,
  \emph{Small gaps between products of two primes}, Proceedings of the London
  Mathematical Society \textbf{98} (2009), no.~3, 741--774.

\bibitem[GPS22]{Gold3}
Daniel~A. Goldston, Apoorva Panidapu, and Jordan Schettler, \emph{Small gaps
  between three almost primes and almost prime powers}, Acta Arith.
  \textbf{203} (2022), no.~1, 1--18. \MR{4415993}

\bibitem[GPY09]{Gold0}
D.~A. Goldston, J.~Pintz, and C.~Y. Y{\i}ld{\i}r{\i}m, \emph{Primes in tuples.
  {I}.}, Annals of Mathematics \textbf{179} (2009), no.~2, 819--862.

\bibitem[May15]{Mayn}
James Maynard, \emph{Small gaps between primes}, Ann. of Math. (2) \textbf{181}
  (2015), no.~1, 383--413.

\bibitem[Mot76]{Moto}
Yoichi Motohashi, \emph{An induction principle for the generalization of
  {B}ombieri's prime number theorem}, Proc. Japan Acad. \textbf{52} (1976),
  no.~6, 273--275.

\bibitem[Pol14]{Poly}
D.~H.~J. Polymath, \emph{Variants of the {S}elberg sieve, and bounded intervals
  containing many primes}, Res. Math. Sci. \textbf{1} (2014), Art. 12, 83.
  \MR{3373710}

\bibitem[Son20]{Sono}
Keiju Sono, \emph{Small gaps between the set of products of at most two
  primes}, J. Math. Soc. Japan \textbf{72} (2020), no.~1, 81--118.

\bibitem[Vin65]{Vino}
A.~I. Vinogradov, \emph{The density hypothesis for {D}irichet {$L$}-series},
  Izv. Akad. Nauk SSSR Ser. Mat. \textbf{29} (1965), 903--934.

\bibitem[Zha14]{Zhan}
Yitang Zhang, \emph{Bounded gaps between primes}, Ann. of Math. (2)
  \textbf{179} (2014), no.~3, 1121--1174.

\end{thebibliography}
\end{document}